\documentclass[10pt,oneside]{amsart}

\usepackage[
paper=a4paper,
headsep=20pt,text={136mm,208mm},centering,includehead
]{geometry}

\usepackage[bookmarks]{hyperref}
\usepackage{amssymb,amsxtra,dsfont}
\usepackage[all]{xy}
\usepackage{pb-diagram,pb-xy}
\usepackage{graphicx,color}
\usepackage{epstopdf}
\usepackage{pinlabel}
\usepackage{float}
\usepackage{array,calc,booktabs}
\usepackage[
  textwidth=1.2in,
  backgroundcolor=yellow,
  bordercolor=orange,
  textsize=small
]{todonotes}
\usepackage{tikz}

\iftrue
\makeatletter
\def\@maketitle{%
  \normalfont\normalsize
  \@adminfootnotes
  \@mkboth{\@nx\shortauthors}{\@nx\shorttitle}%
  \global\topskip42\p@\relax 
  \@settitle
  \ifx\@empty\authors \else \@setauthors \fi
  \ifx\@empty\@dedicatory
  \else
    \baselineskip22\p@
    \vtop{{\small\itshape\@dedicatory\@@par}%
      \global\dimen@i\prevdepth}\prevdepth\dimen@i
  \fi
  \@setabstract
  \normalsize
  \if@titlepage
    \newpage
  \else
    \dimen@25\p@ \advance\dimen@-\baselineskip
    \vskip\dimen@\relax
  \fi
} 
\def\@settitle{%
  \vspace*{-15pt}
  \begin{flushleft}%
    \LARGE\bfseries
    \strut\@title\strut
  \end{flushleft}%
}
\def\@setauthors{%
  \begingroup
  \def\thanks{\protect\thanks@warning}%
  \trivlist
  \raggedright
  \large \@topsep27\p@\relax
  \advance\@topsep by -\baselineskip
  \item\relax
  \author@andify\authors
  \def\\{\protect\linebreak}%
  \authors
  \ifx\@empty\contribs
  \else
    ,\penalty-3 \space \@setcontribs
    \@closetoccontribs
  \fi
  \normalfont
  \endtrivlist
  \endgroup
}
\def\@setaddresses{\par
  \nobreak \begingroup
  \small\raggedright
  \def\author##1{\nobreak\addvspace\smallskipamount}%
  \def\\{\unskip, \ignorespaces}%
  \interlinepenalty\@M
  \def\address##1##2{\begingroup
    \par\addvspace\bigskipamount\noindent
    \@ifnotempty{##1}{(\ignorespaces##1\unskip) }%
    {\ignorespaces##2}\par\endgroup}%
  \def\curraddr##1##2{\begingroup
    \@ifnotempty{##2}{\nobreak\noindent\curraddrname
      \@ifnotempty{##1}{, \ignorespaces##1\unskip}\/:\space
      ##2\par}\endgroup}%
  \def\email##1##2{\begingroup
    \@ifnotempty{##2}{\nobreak\noindent E-mail address%
      \@ifnotempty{##1}{, \ignorespaces##1\unskip}\/:\space
      \ttfamily##2\par}\endgroup}%
  \def\urladdr##1##2{\begingroup
    \def~{\char`\~}%
    \@ifnotempty{##2}{\nobreak\noindent\urladdrname
      \@ifnotempty{##1}{, \ignorespaces##1\unskip}\/:\space
      \ttfamily##2\par}\endgroup}%
  \addresses
  \endgroup
  \global\let\addresses=\@empty
}
\def\@setabstracta{%
    \ifvoid\abstractbox
  \else
    \skip@15pt \advance\skip@-\lastskip
    \advance\skip@-\baselineskip \vskip\skip@
    \box\abstractbox
    \prevdepth\z@ 
    \vskip-15pt
  \fi
}
\renewenvironment{abstract}{%
  \ifx\maketitle\relax
    \ClassWarning{\@classname}{Abstract should precede
      \protect\maketitle\space in AMS document classes; reported}%
  \fi
  \global\setbox\abstractbox=\vtop \bgroup
    \normalfont\small
    \list{}{\labelwidth\z@
      \leftmargin0pc \rightmargin\leftmargin
      \listparindent\normalparindent \itemindent\z@
      \parsep\z@ \@plus\p@
      
    }%
    \item[\hskip\labelsep\bfseries\abstractname.]%
}{%
  \endlist\egroup
  \ifx\@setabstract\relax \@setabstracta \fi
}

\def\ps@headings{\ps@empty
  \def\@evenhead{%
    \setTrue{runhead}%
    \normalfont\scriptsize
    \rlap{\thepage}\hfill
    \def\thanks{\protect\thanks@warning}%
    \leftmark{}{}}%
  \def\@oddhead{%
    \setTrue{runhead}%
    \normalfont\scriptsize
    \def\thanks{\protect\thanks@warning}%
    \rightmark{}{}\hfill \llap{\thepage}}%
  \let\@mkboth\markboth
}\ps@headings

\def\section{\@startsection{section}{1}%
  \z@{-1.4\linespacing\@plus-.5\linespacing}{.8\linespacing}%
  {\normalfont\bfseries\Large}}
\def\subsection{\@startsection{subsection}{2}%
  \z@{-.8\linespacing\@plus-.3\linespacing}{.5\linespacing\@plus.2\linespacing}%
  {\normalfont\bfseries\large}}
\def\subsubsection{\@startsection{subsubsection}{3}%
  \z@{.7\linespacing\@plus.2\linespacing}{-1.5ex}%
  {\normalfont\bfseries}}
\def\@secnumfont{\bfseries}

\renewcommand\contentsnamefont{\bfseries}
\def\@starttoc#1#2{\begingroup
  \setTrue{#1}%
  \par\removelastskip\vskip\z@skip
  \@startsection{}\@M\z@{\linespacing\@plus\linespacing}%
    {.5\linespacing}{
      \contentsnamefont}{#2}%
  \ifx\contentsname#2%
  \else \addcontentsline{toc}{section}{#2}\fi
  \makeatletter
  \@input{\jobname.#1}%
  \if@filesw
    \@xp\newwrite\csname tf@#1\endcsname
    \immediate\@xp\openout\csname tf@#1\endcsname \jobname.#1\relax
  \fi
  \global\@nobreakfalse \endgroup
  \addvspace{32\p@\@plus14\p@}%
  \let\tableofcontents\relax
}
\def\contentsname{Contents}
\def\l@section{\@tocline{2}{.5ex}{0mm}{5pc}{}}
\def\l@subsection{\@tocline{2}{0pt}{2em}{5pc}{}}
\makeatother
\fi 


\def\to{\mathchoice{\longrightarrow}{\rightarrow}{\rightarrow}{\rightarrow}}
\makeatletter
\newcommand{\shortxra}[2][]{\ext@arrow 0359\rightarrowfill@{#1}{#2}}
\def\longrightarrowfill@{\arrowfill@\relbar\relbar\longrightarrow}
\newcommand{\longxra}[2][]{\ext@arrow 0359\longrightarrowfill@{#1}{#2}}

\def\addtagsub#1{\let\oldtf=\tagform@\def\tagform@##1{\oldtf{##1}\hbox{$_{#1}$}}}


\makeatother


\makeatletter
\def\Nopagebreak{\@nobreaktrue\nopagebreak}

\newtheoremstyle{theorem-giventitle}
        {}{}              
        {\itshape}                      
        {}                              
        {\bfseries}                     
        {.}                             
        {\thm@headsep}                             
        {\thmnote{\bfseries#3}}
\newtheoremstyle{theorem-givenlabel}
        {}{}              
        {\itshape}                      
        {}                              
        {\bfseries}                     
        {.}                             
        {\thm@headsep}                             
        {\thmname{#1}~\thmnumber{#3}\setcurrentlabel{#3}}

\newtheoremstyle{definition-giventitle}
        {}{}              
        {}                      
        {}                              
        {\bfseries}                     
        {.}                             
        {\thm@headsep}                             
        {\thmnote{\bfseries#3}}
\def\setcurrentlabel#1{\gdef\@currentlabel{#1}}

\makeatother

\newtheorem{theorem}{Theorem}[section]

\newtheorem{proposition}[theorem]{Proposition}

\newtheorem{lemma}[theorem]{Lemma}

\theoremstyle{definition}
\newtheorem{definition}[theorem]{Definition}

\theoremstyle{theorem-giventitle}
\newtheorem{theorem-named}{}
\theoremstyle{theorem-givenlabel}
\newtheorem{theorem-labeled}{Theorem}

\theoremstyle{definition-giventitle}
\newtheorem{definition-named}{}
\newtheorem{step-named}{}

\numberwithin{equation}{section}

\def\Z{\mathbb{Z}}
\def\Q{\mathbb{Q}}

\def\K{\mathcal{K}}

\def\fs{\mathfrak{s}}
\def\dbar{\bar{d}}
\def\Sq{\Sigma^q}

\def\Ker{\operatorname{Ker}}

\def\Spinc{\operatorname{Spin}^{c}}

\def\rhot{\rho^{(2)}}

\def\setminus{\smallsetminus}

\begin{document}

\vspace*{0mm}

\title[Polynomial splittings]{Polynomial splittings of correction terms and doubly slice knots}

\author{Se-Goo Kim}
\address{Department of Mathematics and Research Institute for Basic
	Sciences, Kyung Hee University, 02447, Korea}
\email{sgkim@khu.ac.kr}
\urladdr{web.khu.ac.kr/\~{}sekim}

\author{Taehee Kim}
\address{
  Department of Mathematics\\
  Konkuk University \\
  Seoul 05029\\
  Korea
}
\email {tkim@konkuk.ac.kr}

\def\subjclassname{\textup{2010} Mathematics Subject Classification}
\expandafter\let\csname subjclassname@1991\endcsname=\subjclassname
\expandafter\let\csname subjclassname@2000\endcsname=\subjclassname
\subjclass{57M25 (primary), 57N70 (secondary)
}

\keywords{Doubly slice knot, Alexander polynomial, Correction term, $d$-invariant}

\begin{abstract}
We show that if the connected sum of two knots with coprime Alexander polynomials is doubly slice, then the Ozsv\'{a}th--Szab\'{o} correction terms as smooth double sliceness obstructions vanish for both knots. Recently, Jeffrey Meier gave smoothly slice knots that are topologically doubly slice,  but not smoothly doubly slice. As an application, we give a new example of such knots that is distinct from Meier's knots modulo doubly slice knots. 
\end{abstract}

\maketitle


\section{Introduction}\label{section:introduction}
A knot $K$ in the 3-sphere $S^3$ is \emph{doubly slice} if there is a smoothly embedded and unknotted 2-sphere $S$ in the 4-sphere $S^4$ which transversely intersects the standard $S^3$ in $S^4$ at $K$. If we allow $S$ to be a topologically locally flat embedded and unknotted 2-sphere, then $K$ is called \emph{topologically doubly slice}. A knot is \emph{slice} if it bounds a smoothly embedded disk in the 4-ball $D^4$. Obviously a doubly slice knot is slice.

There have been known results on splittings of sliceness obstructions for knots with coprime Alexander polynomials. Let $K_1$ and $K_2$ be knots and $K:=K_1\# K_2$, their connected sum. Suppose $K_1$ and $K_2$ have coprime Alexander polynomials.
Under this hypothesis, Levine \cite{Levine:1969-2} showed that if $K$ is algebraically slice, then all $K_i$ are algebraically slice, giving $p(t)$-primary decomposition of the algebraic concordance group. The first author \cite{Kim:2005-2} showed that if $K$ has vanishing Casson--Gordon invariants, then so do all $K_i$. The authors \cite{Kim-Kim:2008-1} showed that if $K$ has vanishing metabelian von Neumann--Cheeger--Gromov $\rhot$-invariants, then so do all $K_i$. Later the authors \cite{Kim-Kim:2014-1} extended the result in \cite{Kim-Kim:2008-1} using higher-order $\rhot$-invariants, and gave a knot which has vanishing Casson--Gordon invariants and has concordance genus greater than 1. We note that Cochran--Harvey--Leidy's work in \cite{Cochran-Harvey-Leidy:2009-2} gave evidence of the $p(t)$-primary decomposition of the solvable filtration of Cochran--Orr--Teichner in \cite{Cochran-Orr-Teichner:1999-1} which would extend Levine's $p(t)$-primary decomposition of the algebraic concordance group in \cite{Levine:1969-2}.

Similar results are known for $d$-invariants of Ozsv{\'a}th--Szab\'{o}. For a rational homology 3-sphere $Y$ and $\Spinc(Y)$, the set of $\Spinc$ structures of $Y$, Ozsv{\'a}th--Szab\'{o} \cite{Ozsvath-Szabo:2003-2} defined a function $d\colon \Spinc(Y)\to \Q$, which is called the \emph{correction term}. Via prime power fold branched cyclic covers of $S^3$ over a knot, the $d$-invariants give rise to sliceness obstructions (see Theorem~\ref{theorem:d-invariant as sliceness obstruction}).

For a knot $K$ and a prime power $q=p^r$, we denote by $\Sq(K)$ the $q$-fold branched cover of $S^3$ over $K$. It is known that $H_1(\Sigma^q(K))$ acts freely and transitively on $\Spinc (\Sigma^q(K))$. We denote by $\fs+a$ the element obtained by the action of $a\in H_1(\Sigma^q(K))$ on $\fs\in \Spinc(\Sigma^q(K))$. Let $\fs_0\in\Spinc(\Sigma^q(K))$ be the \emph{canonical $\Spinc$ structure} of $(K,q)$ (see Section~\ref{subsection:spinc structures}), and for each $a\in H_1(\Sq(K))$ define
\[
\dbar(\Sq(K),\fs_0+a):=d(\Sq(K),\fs_0+a)-d(\Sq(K),\fs_0).
\]
We say that a knot $K$ has \emph{vanishing $d$-invariants on $\Sq(K)$} if there exists a metabolizer $G$ for the linking form $H_1(\Sigma^q(K))\times H_1(\Sigma^q(K))\to \Q/\Z$ such that $\dbar(\Sq(K),\fs_0+a)=0$ for all $a\in G$.

Hedden--Livingston--Ruberman \cite[Theorem~3.2]{Hedden-Livingston-Ruberman:2010-1} showed that if $H_1(\Sq(K_1))$ and $H_1(\Sq(K_2))$ have coprime orders and $K_1\# K_2$ is slice, then each $K_i$ has vanishing $d$-invariants on $\Sq(K_i)$. Bao \cite[Theorem~1.1]{Bao:2015-1} showed that if $K_1$ and $K_2$ have coprime Alexander polynomials and $K_1\# K_2$ is slice, then each $K_i$ has vanishing $d$-invariants on $\Sigma^{p^r}\!(K_i)$ for all but finitely many primes $p$ and all natural numbers $r$. She also showed that the finiteness restriction on $p$ can be removed if $K_1\# K_2$ is ribbon.  

We present a new splitting theorem of $d$-invariants concerning double sliceness, and give an application.  

\begin{definition}\label{definition:vanishing d-invariants}
	Given a prime power $q$, a knot $K$ has \emph{doubly vanishing $d$-invariants on $\Sq(K)$} if there exist metabolizers $G_1$ and $G_2$ for the linking form $H_1(\Sigma^q(K))\times H_1(\Sigma^q(K))\to \Q/\Z$ such that $H_1(\Sigma^q(K))=G_1\oplus G_2$ and $\dbar(\Sq(K),\fs_0+a)=0$ for all $a\in G_1\cup G_2$. A knot is said to have \emph{doubly vanishing $d$-invariants} if it has doubly vanishing $d$-invariants on $\Sq(K)$ for every prime power $q$.
\end{definition}
It is well-known to the experts and has appeared in various forms in the literature that a doubly slice knot has doubly vanishing $d$-invariants (for example, see Theorem~\ref{theorem:d-invariant as double sliceness obstruction}). We give the following new splitting theorem of $d$-invariants:
\begin{theorem}[Main Theorem]
\label{theorem:main}
	Let $K_1$ and $K_2$ be knots. Suppose that the Alexander polynomials of $K_1$ and $K_2$ are coprime in $\Q[t^{\pm 1}]$. If $K_1\# K_2$ is doubly slice, then both $K_1$ and $K_2$ have doubly vanishing $d$-invariants.
\end{theorem}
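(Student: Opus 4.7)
Fix a prime power $q$ and write $A := H_1(\Sq(K))$ and $A_i := H_1(\Sq(K_i))$. My first move would be to record the decomposition $\Sq(K_1 \# K_2) = \Sq(K_1) \# \Sq(K_2)$ of 3-manifolds, which yields a $\Z[\Z/q]$-equivariant orthogonal direct sum $A = A_1 \oplus A_2$ under the linking form, with the canonical $\Spinc$ structure respecting the splitting. The Ozsv\'ath--Szab\'o additivity of the correction term under connected sums then gives
\[
\dbar(\Sq(K), \fs_0 + (a_1, a_2)) = \dbar(\Sq(K_1), \fs_0 + a_1) + \dbar(\Sq(K_2), \fs_0 + a_2)
\]
for $(a_1, a_2) \in A_1 \oplus A_2$.

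Since $K_1 \# K_2$ is doubly slice, I would next invoke Theorem~\ref{theorem:d-invariant as double sliceness obstruction} to obtain metabolizers $G_1, G_2$ of the linking form on $A$ with $A = G_1 \oplus G_2$ and $\dbar(\Sq(K),\fs_0+a)=0$ for $a \in G_1 \cup G_2$. Each $G_j$ arises as the kernel of $A \to H_1$ of the $q$-fold branched cover of $B^4$ over one of the two disks $D_\pm$ cut out by the doubly slicing sphere, so both $G_j$ are $\Z[\Z/q]$-submodules of $A$.

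The heart of the argument, and what I expect to be the main obstacle, is an algebraic decomposition lemma: under coprimality of $\Delta_{K_1}$ and $\Delta_{K_2}$ in $\Q[t^{\pm 1}]$, any $\Z[\Z/q]$-invariant metabolizer $G \subseteq A$ splits as $G = (G \cap A_1) \oplus (G \cap A_2)$, with each summand a metabolizer of the linking form on $A_i$. The key algebraic input is that $\Delta_{K_i}(t)$, as a generator of the zeroth Fitting ideal of the Alexander module obtained from a Seifert presentation, annihilates $A_i$ as a $\Z[t^{\pm 1}]/(t^q-1)$-module. A Bezout identity $u(t)\Delta_{K_1}(t) + v(t)\Delta_{K_2}(t) = N$ in $\Z[t^{\pm 1}]$ with $N \in \Z\setminus\{0\}$ then provides projector-like elements and produces the decomposition rationally. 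The hard part is promoting this rational splitting to an integral one at the \emph{bad primes} $p \mid N$, at which the annihilators of $A_1$ and $A_2$ cease to be coprime over $\Z_{(p)}[t]/(t^q-1)$; at these primes I would exploit the doubly slice hypothesis, which supplies two complementary $\Z[\Z/q]$-invariant metabolizers simultaneously and so imposes stronger constraints on each $p$-primary component than a single metabolizer would, along the lines of the splitting techniques used by the authors in \cite{Kim-Kim:2008-1,Kim-Kim:2014-1}.

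Granting this splitting, I apply it to both $G_1$ and $G_2$ to obtain metabolizers $H^{(i)}_j := G_j \cap A_i$ of the linking form on $A_i$ with $A_i = H^{(i)}_1 \oplus H^{(i)}_2$. For $a \in H^{(i)}_j$, the additivity displayed above, combined with $\dbar(\Sq(K_{3-i}),\fs_0)=0$, reduces $\dbar(\Sq(K_i), \fs_0 + a)$ to $\dbar(\Sq(K), \fs_0 + a)$, which vanishes because $a \in G_j$. Hence $K_i$ has doubly vanishing $d$-invariants on $\Sq(K_i)$, and since $q$ was an arbitrary prime power, Theorem~\ref{theorem:main} follows.
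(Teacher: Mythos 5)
There is a genuine gap, and you have located it yourself: the ``algebraic decomposition lemma'' asserting that a $\Lambda$-invariant metabolizer $G$ of the linking form on the finite group $A=H_1(\Sq(K))$ splits as $(G\cap A_1)\oplus(G\cap A_2)$ is exactly the hard point, and your sketch does not prove it. The Bezout identity $u\Delta_{K_1}+v\Delta_{K_2}=N$ only yields $(Nx,0)\in G$ for $(x,y)\in G$; since $A$ is finite you cannot cancel $N$ at primes dividing $\gcd(N,|A|)$, and this is precisely why Bao's splitting theorem for sliceness carries a finiteness restriction on primes. Your proposal to ``exploit the doubly slice hypothesis'' through the two complementary metabolizers of the finite linking form is only a hope: no mechanism is given by which $A=G_1\oplus G_2$ controls the bad $p$-primary components, and it is not clear that one exists at that level.

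The paper resolves this differently, and the difference is the essential content. One works upstairs with the Blanchfield form on the Alexander module $H_1(S^3\setminus K;\Lambda)$ rather than with the linking form on the branched cover. Because the doubly slicing sphere is unknotted, a Mayer--Vietoris argument gives $H_1(S^3\setminus K;\Lambda)=H_1(D^4_+\setminus D_+;\Lambda)\oplus H_1(D^4_-\setminus D_-;\Lambda)$, so both summands are $\Z$-torsion free; Proposition~\ref{proposition:metabolizer for Blanchfield form} then produces complementary Blanchfield metabolizers $M_\pm$ that are direct summands of a $\Z$-torsion-free module. In that setting the Bezout argument does close: from $(cx,0)\in M_+$ one concludes $(x,0)\in M_+$ precisely because $M_+$ is a direct summand of a $\Z$-torsion-free module (Lemma~\ref{lemma:splitting of metabolizers}). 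The split metabolizers are then pushed down to $\Lambda$-metabolizers of the linking forms on $H_1(\Sq(K_i))$ via Proposition~\ref{proposition:projection of metabolizers}, identified with $\Ker(j_\pm)$ by a counting argument, and the vanishing of $\dbar$ follows from Theorem~\ref{theorem:d-invariant as sliceness obstruction} together with additivity of $d$ under connected sum, as in your last paragraph. So your outer scaffolding (split the metabolizers, then use additivity) matches the paper, but the central splitting must be carried out on the Blanchfield form and only afterwards projected to the branched cover; as written, your proposal does not supply that step.
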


We note that there are knots $K_1$ and $K_2$ such that their Alexander polynomials are coprime, but $H_1(\Sq(K_1))$ and $H_1(\Sq(K_2))$ have the same order for all prime power $q$ \cite{KIm:2009-1}. For these knots the previous theorem is necessary to see that $d$--invariants split. Theorem~\ref{theorem:main} can be considered as an extension of \cite[Theorem~1.1]{Bao:2015-1} to the case of double sliceness, and can be proved using Seifert forms similarly as done by Bao for the case of sliceness. However we prove it using Blanchfield forms, which, we think, is a simpler way.

Unlike Bao's theorem, we do not need any finiteness condition on primes $p$ for double sliceness. A crucial distinction between doubly slice knots and slice knots is that a doubly slice knot bounds two slice disks $D_\pm$ such that $D_+\cup D_-$ is an unknotted 2-sphere in $S^4$, and therefore $H_1(D^4\setminus D_\pm;\Z[t^{\pm }])$ are $\Z$-torsion free. See the proof of Theorem~\ref{theorem:main} for details.  

Meier \cite{Meier:2015-1} gave an infinite family of slice knots $K_p$ (denoted $\K_{p,k}$ in \cite{Meier:2015-1}) for odd primes $p$ that are topologically doubly slice, but not doubly slice.  As an application of our main theorem, we give another example of a slice knot $K$ distinct from all $K_p$, modulo double sliceness, that is  topologically doubly slice, but not doubly slice. Explictly, let $T$ be the positive untwisted Whitehead double of the right-handed trefoil and let $K$ be the knot given in Figure~\ref{figure1}. 
\begin{figure}
\setlength{\unitlength}{0.5pt}
\begin{picture}(258,254)
\put(0,0){\includegraphics[scale=0.5]{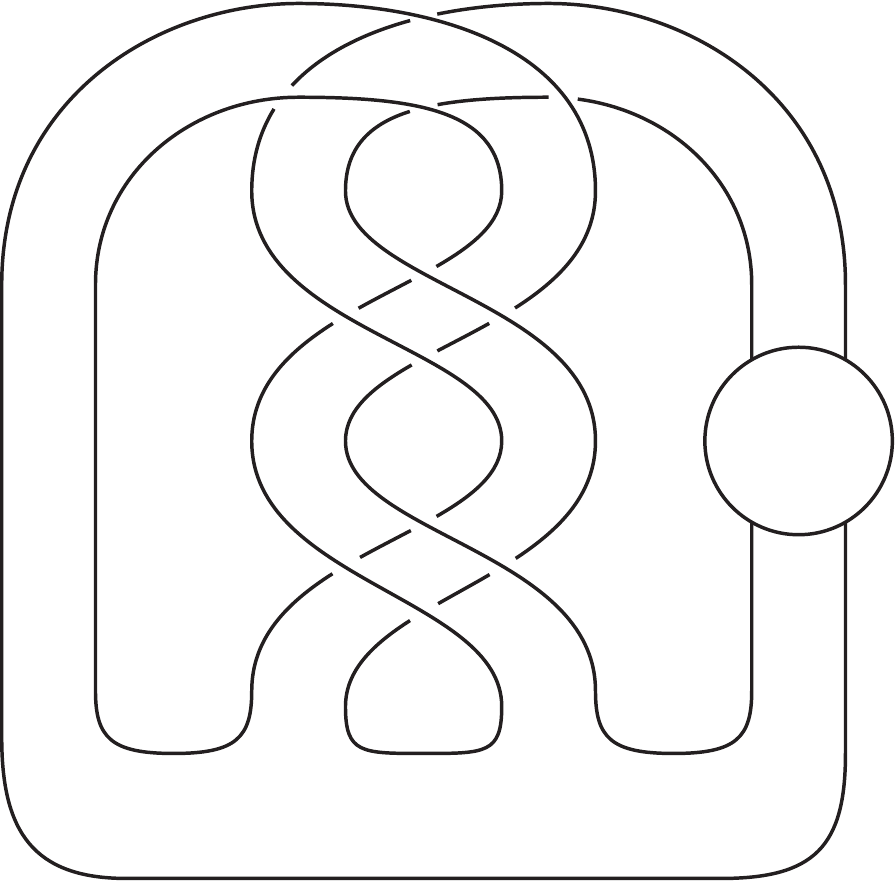}}
\put(232,127){\makebox(0,0){\large $T$}}
\end{picture}
\caption{The knot $K$.}\label{figure1}
\end{figure}
Namely, $K$ is a satellite of $T$ with pattern the $9_{46}$ knot.  We have the following theorem.

\begin{theorem}\label{theorem:example} Let $K$ be the knot in Figure~1. Let $n,n_i\in \Z$ and let $p_i$ be odd primes. 
\begin{enumerate}
	\item The knot $K$ is slice, topologically doubly slice, but not doubly slice. Indeed, $K$ does not have doubly vanishing $d$-invariants..
	\item Suppose $n=1$ or $n_i=1$ for some $i$. Then, the knot $nK\# (\#_{i=1}^m n_iK_{p_i})$ is not doubly slice. In particular, $K\#(-K_{p_i})$ is not doubly slice for all $p_i$.
\end{enumerate}
\end{theorem}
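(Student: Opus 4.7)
My plan is to establish (1) first---the sliceness, topological double sliceness, and failure of doubly vanishing $d$-invariants of $K$---and then deduce (2) from Theorem~\ref{theorem:main}. For sliceness in (1), the pattern $9_{46}$ is a ribbon knot and its embedding in the solid torus shown in Figure~\ref{figure1} can be arranged so that a ribbon move visible in the figure persists under the satellite operation; I would produce a ribbon disk for $K$ directly from the diagram. For topological double sliceness, since $T$ is the untwisted Whitehead double of the right-handed trefoil we have $\Delta_T(t)=1$, so Freedman's disk theorem gives a topologically locally flat slice disk for $T$ in $D^4$ whose exterior has the homotopy type of the exterior of a standard unknotted disk. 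This allows the satellite construction to be carried out topologically as if $T$ were unknotted, so any topological doubly slice structure for $9_{46}$ (which is algebraically doubly slice and can be shown to be topologically doubly slice) descends to one for $K$. The third and most technical assertion in (1)---failure of doubly vanishing $d$-invariants---would be a direct calculation: describe $\Sigma^2(K)$ and its linking form on $H_1(\Sigma^2(K))$, enumerate every direct-sum decomposition $H_1(\Sigma^2(K))=G_1\oplus G_2$ into metabolizers of the linking form, and use Ozsv\'ath--Szab\'o $d$-invariant formulas (likely via a plumbing description of $\Sigma^2(K)$) to exhibit, for each such decomposition, an element $a\in G_1\cup G_2$ with $\dbar(\Sigma^2(K),\fs_0+a)\ne 0$.

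For part (2), I would argue by contradiction using Theorem~\ref{theorem:main}. Suppose $nK\#(\#_{i=1}^{m}n_i K_{p_i})$ is doubly slice. In the case $n=1$, write this as $K\#L$ with $L=\#_i n_i K_{p_i}$. Because $9_{46}$ is a winding-number-zero pattern, $\Delta_K(t)=\Delta_{9_{46}}(t)=(2t-1)(t-2)$, while each $\Delta_{K_{p_i}}(t)$ has irreducible factors in $\Q[t^{\pm 1}]$ involving only the odd prime $p_i\ne 2$. Hence $\Delta_K$ and $\Delta_L$ are coprime in $\Q[t^{\pm 1}]$, and Theorem~\ref{theorem:main} forces $K$ to have doubly vanishing $d$-invariants, contradicting (1). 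In the case $n_j=1$ for some $j$ (and with the $p_i$ taken to be distinct), rewrite the sum as $K_{p_j}\#L'$ and verify coprimality of $\Delta_{K_{p_j}}$ with $\Delta_{L'}$ in the same way; Theorem~\ref{theorem:main} then implies $K_{p_j}$ has doubly vanishing $d$-invariants, contradicting Meier's obstruction, which rules out $K_{p_j}$ from double sliceness by showing the failure of precisely this $d$-invariant condition.

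The main obstacle I anticipate is the $\dbar$-invariant computation in (1): one must describe $\Sigma^2(K)$ concretely enough to enumerate and rule out \emph{every} metabolizer decomposition of its linking form. Secondary bookkeeping in (2) is the verification of Alexander-polynomial coprimality, which is where the odd-prime hypothesis on the $p_i$ (together with implicit distinctness of the primes in the $n_j=1$ case) is essential.
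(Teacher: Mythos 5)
Your overall architecture matches the paper's: establish (1), then deduce (2) from Theorem~\ref{theorem:main} via coprimality of Alexander polynomials. Your part (2) is essentially the paper's argument (including the correct observations that $\Delta_{K_{p_i}}=\phi_{2p_i}^2$ is coprime to $\Delta_K=(2t-1)(t-2)$, that the $p_i$ must be implicitly distinct in the $n_i=1$ case, and that the $n_i=1$ case closes by citing Meier's result that $K_{p_i}$ fails to have doubly vanishing $d$-invariants). Your sliceness and topological double sliceness arguments in (1) also agree with the paper's (a slice/ribbon disk visible from the Seifert surface; Freedman for $T$ since $\Delta_T=1$, combined with Meier's satellite proposition applied to the doubly slice pattern $9_{46}$).

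The genuine gap is the crucial step of (1): showing that $K$ does not have doubly vanishing $d$-invariants. You propose a from-scratch computation on the \emph{double} branched cover $\Sigma^2(K)$, ``likely via a plumbing description.'' This is not viable as stated. Since $K$ is a winding-number-zero satellite whose companion $T$ is the untwisted Whitehead double of the trefoil, $\Sigma^2(K)$ is obtained from $\Sigma^2(9_{46})$ by splicing in exteriors of lifts of $T$; it therefore contains hyperbolic JSJ pieces and admits no plumbing description, and there is no a priori guarantee that the obstruction is even visible in the $2$-fold cover. The paper instead works with the $3$-fold branched cover: $H_1(\Sigma^3(K))\cong\Z_7\oplus\Z_7$ and the linking form has exactly two metabolizers $\langle x_1\rangle$ and $\langle y_1\rangle$ (so the enumeration of metabolizer decompositions you flag as the main obstacle becomes trivial), and the needed estimate $d(\Sigma^3(K),\fs_0+4x_2)\le -\tfrac{3}{2}$ with $4x_2=x_1$ is imported directly from the computation of Cochran--Harvey--Horn for $K=R(U,T)$. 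Without either citing that computation or actually carrying out a substitute for it, the central claim of (1) --- and hence the $n=1$ case of (2) --- remains unproved.
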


This paper is organized as follows. We give the background material on Blachfield forms, linking forms, and the correction terms in Section~\ref{section:preliminaries}. We prove Theorems~\ref{theorem:main} and \ref{theorem:example} in Section~\ref{section:polynomial splitting}.

\subsection*{Acknowledgments} 
The first author was supported by Basic Science Research Program through the National Research Foundation of Korea(NRF) funded by the Ministry of Education(NRF-2011-0012893).
The second author was supported by Basic Science Research Program through the National Research Foundation of Korea(NRF) funded by the Ministry of Education (no. 2011-0030044(SRC-GAIA) and no. 2015R1D1A1A01056634).

\section{Preliminaries}\label{section:preliminaries}
\subsection{Blanchfield forms and linking forms}\label{subsection:Blanchfield form}
For a knot $K$, let $M(K)$ denote the zero framed surgery on $K$ in $S^3$. Let $\Lambda:=\Z[t^{\pm 1}]$. It is known that $H_1(S^3\setminus K;\Lambda)\cong H_1(M(K);\Lambda)$ as $\Lambda$-modules, and there is a nonsingular hermitian sesquilinear form 
\[
B\ell\colon H_1(M(K);\Lambda)\times H_1(M(K);\Lambda)\to S^{-1}\Lambda/\Lambda,
\]
which is called the \emph{Blanchfield form} of $K$. Here $S^{-1}=\{f\in\Lambda\,\mid\, f(1)=1\}$. For a $\Lambda$-submodule $P$ of $H_1(M(K);\Lambda)$, let 
\[
P^\perp:=\{y\in H_1(M(K);\Lambda)\,\mid\, B\ell(x,y)=0\mbox{ for all }x\in P\}.
\]
We say that $P$ is a \emph{metabolizer} for the Blanchfield form if $P=P^\perp$. 

For a disk $D$ properly embedded in $D^4$,  let $X(D):=D^4\setminus N(D)$ where $N(D)$ is the open tubular neighborhood of $D$. If $D$ is a slice disk for a knot $K$, then $\partial X(D) = M(K)$. We have the following well-known proposition, for example see \cite[Proposition~2.7]{Friedl:2003-6}.

\begin{proposition}\label{proposition:metabolizer for Blanchfield form}
	Let $K$ be a slice knot and $D$ a slice disk for $K$ in $D^4$. Suppose $H_1(X(D);\Lambda)$ is $\Z$-torsion free. Then $\Ker\{i_*\colon H_1(M(K);\Lambda)\to H_1(X(D);\Lambda)\}$is a metabolizer for the Blanchfield form, where $i_*$ is the homomorphism induced from the  inclusion.
\end{proposition}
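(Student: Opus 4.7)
My plan is to adapt the classical \emph{half-lives-half-dies} argument to Blanchfield forms with $\Lambda$-coefficients; this is exactly the template followed in the cited reference \cite{Friedl:2003-6}. Write $W := X(D)$ and $M := M(K)$, so that $\partial W = M$, and set $P := \Ker\{i_* \colon H_1(M;\Lambda) \to H_1(W;\Lambda)\}$. The task splits into the two inclusions $P \subseteq P^\perp$ (the ``half that lives'') and $P^\perp \subseteq P$ (the ``half that dies''). First I would read off $P = \Im(\partial)$ from the tail of the long exact sequence of the pair $(W, M)$ in $\Lambda$-homology,
\[
H_2(W;\Lambda) \longrightarrow H_2(W, M;\Lambda) \xrightarrow{\;\partial\;} H_1(M;\Lambda) \xrightarrow{\;i_*\;} H_1(W;\Lambda).
\]

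Next I would assemble the ``duality square'' linking this sequence to its Poincar\'e--Lefschetz dual in cohomology with $S^{-1}\Lambda/\Lambda$-coefficients. The Blanchfield pairing enters as the vertical map $\Bl \colon H_1(M;\Lambda) \to \overline{H^1(M; S^{-1}\Lambda/\Lambda)} \cong \overline{\Hom_\Lambda(H_1(M;\Lambda), S^{-1}\Lambda/\Lambda)}$, where the second isomorphism uses that $H_1(M;\Lambda)$ is $\Lambda$-torsion. For the inclusion $P \subseteq P^\perp$, a diagram chase then suffices: given $x = \partial \alpha \in P$ with $\alpha \in H_2(W, M;\Lambda)$ and any $y \in P$, commutativity of the square reduces $\Bl(x, y)$ to the evaluation of a cocycle representative of $\alpha$ on $i_*(y) = 0$, which therefore vanishes.

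For the reverse inclusion $P^\perp \subseteq P$, I would combine the nonsingularity of $\Bl$ with a comparison of $\Lambda$-module orders. Nonsingularity identifies $P^\perp$ with the $\Lambda$-dual of $H_1(M;\Lambda)/P$, and the duality square matches this dual with $\Im(\partial) = P$ itself. The hypothesis that $H_1(W;\Lambda)$ is $\Z$-torsion free enters decisively at this step: through the Bockstein long exact sequence associated with $0 \to \Lambda \to S^{-1}\Lambda \to S^{-1}\Lambda/\Lambda \to 0$, it forces the relevant $\Ext^1_\Lambda$ and $\Tor$ contributions appearing in the universal-coefficient identifications on the cohomology side of the square to collapse, so that $P^\perp$ really is isomorphic to $P$.

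The main obstacle is setting up the duality square rigorously and verifying that it intertwines the Blanchfield form with the composition of connecting homomorphisms it is supposed to represent. Once that bookkeeping is in place, both inclusions follow from routine diagram chases; the $\Z$-torsion-free hypothesis on $H_1(W;\Lambda)$ is precisely what makes the cohomological side of the square behave as cleanly as it would over a PID, which is the reason the proposition needs it.
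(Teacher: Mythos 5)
The paper never proves this proposition—it is imported as a ``well-known'' fact with a pointer to Friedl's thesis (the cited Proposition~2.7)—and your outline reproduces exactly the half-lives-half-dies duality argument carried out in that source: identify $P=\Im(\partial)$, use the Poincar\'e--Lefschetz/Bockstein square to get $P\subseteq P^{\perp}$, and use nonsingularity plus a surjectivity statement coming from the same square to get $P^{\perp}\subseteq P$. So there is no genuine difference of approach to report; the one place your sketch is softer than it should be is the last step, where you attribute the needed collapse to ``$\Ext^1_\Lambda$ and $\Tor$'' corrections: what the $\Z$-torsion-freeness of $H_1(X(D);\Lambda)$ is actually used for is to control the higher $\Ext$ term in the universal-coefficient spectral sequence over the dimension-two ring $\Lambda$ (equivalently, to let $H_1(X(D);\Lambda)$ inject into its rationalization so the PID argument over $\Q[t^{\pm1}]$ applies), and this is precisely what makes the composite $H_2(X(D),M(K);\Lambda)\to \overline{\Hom_\Lambda(P,S^{-1}\Lambda/\Lambda)}$ surjective; you should pin that identification down explicitly rather than asserting it.
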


Recall that for a prime power $q$ we let $\Sq(K)$ denote the $q$-fold branched cyclic cover of $S^3$ over $K$. Then $\Sq(K)$ is a rational homology 3-sphere and there is a nonsingular linking form $\lambda^q\colon H_1(\Sq(K))\times H_1(\Sq(K))\to \Q/\Z$. For a subgroup $P$ of $H_1(\Sq(K))$, we define 
\[
P^\perp:=\{y\in H_1(\Sq(K))\,\mid\, \lambda^q(x,y)=0\mbox{ for all }x\in P\}.
\]

We say that $P$ is a \emph{metabolizer} for $\lambda^q$ if $P=P^\perp$. If $P$ is a metabolizer, we have $|P|^2= |H_1(\Sq(K))|$. It is known that $H_1(\Sq(K))=H_1(M(K);\Lambda)/(t^q-1)$ as $\Lambda$-modules, and we say that $P$ is a \emph{$\Lambda$-metabolizer} if $P=P^\perp$ and $P$ is a $\Lambda$-submodule of $H_1(\Sq(K))$. For a disk $D$ properly embedded in $D^4$, let $W^q(D)$ be the $q$-fold branched cyclic cover of $D^4$ over $D$. If a knot bounds a slice disk $D$ in $D^4$, then $W^q(D)$ is a rational homology 4-ball such that $\partial W^q(D)=\Sq(K)$. We have the following well-known proposition, for example see \cite[Proposition~2.15]{Friedl:2003-6}.

\begin{proposition}\label{proposition:metabolizer for linking form}
Let $K$ be a slice knot and $D$ a slice disk for $K$ in $D^4$. Then $\Ker\{H_1(\Sq(K))\to H_1(W^q(D))\}$ is a $\Lambda$-metabolizer for the linking form $\lambda^q$. 
\end{proposition}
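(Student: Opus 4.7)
The plan is to verify three properties of $P := \ker\{i_* \colon H_1(Y) \to H_1(W)\}$, where $W := W^q(D)$ and $Y := \Sq(K)$: that $P$ is $\Lambda$-invariant, that $P \subseteq P^\perp$, and that $|P|^2 = |H_1(Y)|$. Combined with the nonsingularity of $\lambda^q$, which gives $|P| \cdot |P^\perp| = |H_1(Y)|$, these three conditions together force $P = P^\perp$. The $\Lambda$-invariance is the easiest step: the $\Z/q$-deck transformation of $Y \to S^3$ extends to the deck transformation of $W \to D^4$, so $i$ is equivariant and hence $i_*$ is $\Z[\Z/q]$-linear; since the $\Lambda$-action on $H_1(Y)$ factors through $\Lambda/(t^q-1) \cong \Z[\Z/q]$, the map $i_*$ is also $\Lambda$-linear and $P = \ker i_*$ is a $\Lambda$-submodule.

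For the cardinality identity $|P|^2 = |H_1(Y)|$, I would combine the long exact sequence of $(W, Y)$ with $\Z$-coefficients, Poincar\'e--Lefschetz duality $H_k(W, Y) \cong H^{4-k}(W)$, and universal coefficients. Since $W$ is a rational homology 4-ball, as already noted in the excerpt, all relevant groups are finite torsion, and the dualities yield $|H_2(W, Y)| = |H_1(W)|$ and $|H_1(W, Y)| = |H_2(W)|$. The exact sequence
\[
0 \to H_2(W) \to H_2(W, Y) \to H_1(Y) \to H_1(W) \to H_1(W, Y) \to 0,
\]
where the outer zeros come from $H_2(Y) = 0$ and from $H_0(Y) \to H_0(W)$ being an isomorphism, then gives $|P| = |H_2(W,Y)|/|H_2(W)| = |H_1(W)|/|H_2(W)|$; taking the alternating product of orders yields $|H_1(Y)| = |P|^2$.

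For $P \subseteq P^\perp$, I would use the standard geometric description of $\lambda^q$ via four-dimensional bounding data. Given $x, y \in P$ represented by 1-cycles $\xi, \eta \subset Y$, choose 2-chains $\alpha, \beta$ in $W$ with $\partial \alpha = \xi$ and $\partial \beta = \eta$. Pushing $\eta$ slightly into $\inte W$ produces $\eta'$ that bounds an interior pushoff $\beta'$ of $\beta$; the intersection number $\alpha \cdot \eta'$ is a well-defined integer in $W$ (as $\alpha$ meets $Y$ only along $\partial \alpha = \xi$ and $\eta' \subset \inte W$), and it computes $\lambda^q(x, y) \pmod \Z$. Since $\eta'$ already bounds in $W$, this intersection is integral and $\lambda^q(x, y) = 0$. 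The main obstacle is the careful justification of the congruence $\lambda^q(x,y) \equiv \alpha \cdot \eta' \pmod \Z$: one must reconcile the definition of $\lambda^q$ via bounding 2-chains in $Y$ with the four-dimensional intersection theory in $W$, which is a routine but technical step fully developed in \cite[Proposition~2.15]{Friedl:2003-6}.
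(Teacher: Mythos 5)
The paper itself gives no proof of this proposition; it is stated as well known with a pointer to Friedl's thesis \cite[Proposition~2.15]{Friedl:2003-6}, and your outline is precisely the standard argument found there (and going back to Casson--Gordon): show the kernel $P$ is a $\Lambda$-submodule via equivariance of the inclusion, show $P\subseteq P^\perp$ by computing the linking form through bounding $2$-chains in $W^q(D)$, and pin down $|P|^2=|H_1(\Sq(K))|$ by the long exact sequence of the pair together with Poincar\'e--Lefschetz duality and the fact that $W^q(D)$ is a rational homology ball. The order count is correct as written: the multiplicative Euler characteristic of the five-term exact sequence, combined with $|H_2(W,Y)|=|H_1(W)|$ and $|H_1(W,Y)|=|H_2(W)|$, gives $|H_1(Y)|=\bigl(|H_1(W)|/|H_2(W)|\bigr)^2=|P|^2$, and together with $|P|\cdot|P^\perp|=|H_1(Y)|$ (nonsingularity) and $P\subseteq P^\perp$ this forces $P=P^\perp$.

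One concrete slip to fix in the isotropy step: you propose to evaluate the intersection number $\alpha\cdot\eta'$ of a $2$-chain with a $1$-cycle inside the $4$-manifold $W$. That pairing is dimensionally wrong ($2+1<4$); generically $\alpha$ and $\eta'$ are disjoint and the number carries no information. The correct quantity is the intersection $\alpha\cdot\beta'$ of the two $2$-chains (with $\beta'$ the interior pushoff of $\beta$), which is an honest integer because $\alpha$ meets $\partial W$ only along $\xi$ and $\beta'$ lies in $\inte W$; the content of the argument is that this integer reduces mod $\Z$ to $\lambda^q(x,y)$, whereas for general classes one only gets a rational intersection number after multiplying by the order of $y$. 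Since you explicitly defer that reconciliation to Friedl, the gap is minor and the overall proof is sound once $\alpha\cdot\eta'$ is replaced by $\alpha\cdot\beta'$.
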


We relate Blanchfield forms to  linking forms.  Define 
\[
\pi^q\colon H_1(M(K);\Lambda)\to H_1(M(K);\Lambda)/(t^q-1)=H_1(\Sq(K))
\] 
to be the projection map. 

\begin{proposition}[{\cite[Proposition~2.18]{Friedl:2003-6}}]
\label{proposition:projection of metabolizers}
Let $K$ be a knot. If $P\subset H_1(M(K);\Lambda)$ is a metabolizer for the Blanchfield form on $H_1(M(K);\Lambda)$, then the submodule $\pi^q(P)\subset H_1(\Sq(K))$ is a $\Lambda$-metabolizer for the linking form $\lambda^q$ on $H_1(\Sq(K))$. 
\end{proposition}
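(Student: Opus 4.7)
Write $A := H_1(M(K);\Lambda)$ and $A_q := H_1(\Sigma^q(K))$, so that by the discussion preceding the proposition we have $A_q \cong A/(t^q-1)A$ as $\Lambda$-modules and $\pi^q$ is the quotient map. Since $\pi^q$ is $\Lambda$-linear and $P$ is a $\Lambda$-submodule of $A$, the image $\pi^q(P)$ is automatically a $\Lambda$-submodule of $A_q$, so the only content is to verify the equality $\pi^q(P) = \pi^q(P)^\perp$ with respect to $\lambda^q$. I will do this in two steps: first isotropy ($\pi^q(P)\subseteq \pi^q(P)^\perp$), then a cardinality comparison.

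For isotropy, the key input is the standard compatibility between the Blanchfield form on $A$ and the linking form on $A_q$. Explicitly, I would produce a well-defined $\mathbb{Z}$-linear map
$$\eta\colon S^{-1}\Lambda/\Lambda \longrightarrow \mathbb{Q}/\mathbb{Z}$$
such that $\lambda^q(\pi^q(x),\pi^q(y)) = \eta(B\ell(x,y))$ for every $x,y\in A$. Such an $\eta$ arises from Poincar\'e--Lefschetz duality applied to $\Sigma^q(K)$ and its infinite cyclic cover, together with the fact that $\Sigma^q(K)$ is a rational homology sphere (which uses that $q$ is a prime power, so that $\Delta_K(t)$ and $t^q-1$ are coprime in $\mathbb{Q}[t^{\pm 1}]$). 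Granting this compatibility, if $x,y\in P$ then $B\ell(x,y)=0$, hence $\lambda^q(\pi^q(x),\pi^q(y))=0$, which gives $\pi^q(P)\subseteq \pi^q(P)^\perp$.

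For the reverse inclusion my plan is a counting argument. Since $\lambda^q$ is nondegenerate we have $|\pi^q(P)|\cdot|\pi^q(P)^\perp|=|A_q|$, so it suffices to show $|\pi^q(P)|^2=|A_q|$. I would apply $-\otimes_\Lambda \Lambda/(t^q-1)$ to the short exact sequence
$$0\longrightarrow P\longrightarrow A\longrightarrow A/P\longrightarrow 0$$
and analyze the resulting six-term $\operatorname{Tor}$ sequence. The coprimality of $\Delta_K(t)$ and $t^q-1$ in $\mathbb{Q}[t^{\pm 1}]$ forces all the $\operatorname{Tor}$ groups appearing to be finite, and comparing orders (multiplicativity of $|{-}|$ in a finite exact sequence) yields $|\pi^q(P)|\cdot|\pi^q(A/P)|=|A_q|$. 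The metabolizer hypothesis $P=P^\perp$ supplies a $\Lambda$-isomorphism
$$A/P \;\xrightarrow{\;\cong\;}\; \overline{\Hom_\Lambda(P,S^{-1}\Lambda/\Lambda)}$$
adjoint to $B\ell$, and reducing this mod $t^q-1$ and comparing with Pontryagin duality on $A_q$ identifies $|\pi^q(A/P)|$ with $|\pi^q(P)|$, yielding $|\pi^q(P)|^2=|A_q|$ as required.

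The hard part will be this last cardinality step: carefully tracking the $\operatorname{Tor}$ terms arising when one tensors with $\Lambda/(t^q-1)$, and verifying that the adjoint isomorphism really does descend, after passing to the quotient, to an equality between $|\pi^q(P)|$ and the order of its Pontryagin annihilator in $A_q$. Once this counting is in place, the combination of isotropy with $|\pi^q(P)|^2=|A_q|$ forces $\pi^q(P)=\pi^q(P)^\perp$, giving the desired $\Lambda$-metabolizer.
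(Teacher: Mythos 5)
The paper does not actually prove this proposition: it is imported verbatim from Friedl's thesis (Proposition~2.18 there), so there is no in-paper argument to compare against. Your two-step strategy---isotropy of $\pi^q(P)$ via a compatibility between the Blanchfield form and the linking form, then a cardinality count---is the standard route and is surely close in spirit to Friedl's. The isotropy half is sound: even if the correct compatibility formula expresses $\lambda^q(\pi^q(x),\pi^q(y))$ in terms of the whole orbit $B\ell(x,t^jy)$, $j=0,\dots,q-1$, rather than through a single $\mathbb{Z}$-linear $\eta$ applied to $B\ell(x,y)$, the conclusion is unaffected, because $P$ is a $\Lambda$-submodule and hence all the values $B\ell(x,t^jy)$ vanish for $x,y\in P$.

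The genuine gap is in the counting step, which you flag as ``the hard part'' but whose plan meets two concrete obstacles. First, the adjoint map $A/P \to \overline{\Hom_\Lambda(P, S^{-1}\Lambda/\Lambda)}$ is injective under your hypotheses (its kernel is $P^\perp/P=0$), but surjectivity requires $\Ext^1_\Lambda(A/P, S^{-1}\Lambda/\Lambda)=0$; since $\Lambda=\Z[t^{\pm1}]$ has global dimension $2$, this $\Ext$ group is controlled by $\Ext^2_\Lambda(A/P,\Lambda)$, i.e.\ by $\Z$-torsion in $A/P$, and nothing in the hypotheses prevents a general metabolizer from having a quotient with $\Z$-torsion. (The paper is sensitive to exactly this issue elsewhere: its Proposition~\ref{proposition:metabolizer for Blanchfield form} assumes $H_1(X(D);\Lambda)$ is $\Z$-torsion free, and the introduction emphasizes that double sliceness is what guarantees such torsion-freeness.) Second, even granting that isomorphism, ``reducing mod $t^q-1$'' conflates $P\otimes_\Lambda \Lambda/(t^q-1)$ with its image $\pi^q(P)\subset H_1(\Sq(K))$: the natural map between them has kernel measured exactly by the $\Tor$ terms you mention, and $\Hom$ does not commute with $-\otimes_\Lambda \Lambda/(t^q-1)$ without further vanishing. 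Until those kernels are shown to cancel in the order count, the identity $|\pi^q(P)|^2=|H_1(\Sq(K))|$---and with it the reverse inclusion $\pi^q(P)^\perp\subseteq\pi^q(P)$---is not established. You would either need to carry out this bookkeeping in full or replace it with the order-of-module argument (relating $|\pi^q(P)|$ to the factorization of the Alexander polynomial determined by $P$) that is the usual way this count is closed in the literature.
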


\subsection{$\Spinc$ structures and correction terms}\label{subsection:spinc structures}
Let $\fs_0\in\Spinc(\Sigma^q(K))$ be the \emph{canonical $\Spinc$ structure} of $(K,q)$ defined as follows: let $f\colon \Sigma^q(K)\to S^3$ be the branched covering map and let $K':=f^{-1}(K)$. Then $\fs_0$ is defined to be the unique $\Spinc$ structure whose restriction to $\Sigma^q(K)\setminus N(K')$ is the pull-back $f^*(\fs)$ of the unique $\Spinc$ structure $\fs$ on $S^3\setminus N(K)$. Here $N(K')$ and $N(K)$ denote the open tubular neighborhoods of $K'$ and $K$, respectively. We note that the $\Spinc$ structure $\fs_0$ is equal to the $\Spinc$ structure given in \cite[Lemma~2.1]{Grigsby-Ruberman-Saso:2008-1} (see \cite[Remark~2.5]{Jabuka:2012-1}). For more details, refer to \cite[Section~2]{Jabuka:2012-1}. Now using $\fs_0$ we can identify $H_1(\Sq(K))$ with $\Spinc(\Sq(K))$ via the map $a\mapsto \fs_0 + a$ for $a\in H_1(\Sq(K))$.

With the canonical $\Spinc$ structure $\fs_0\in \Spinc (\Sq(K))$, we have the following sliceness obstruction. 

\begin{theorem}[{\cite{Grigsby-Ruberman-Saso:2008-1}}]
\label{theorem:d-invariant as sliceness obstruction} Let $q$ be a prime power. Suppose that a knot $K$ bounds a slice disk $D$ in $D^4$. Then, $d(\Sq(K),\fs_0+a)=0$ for all $a\in \Ker\{i_*\colon H_1(\Sq(K))\to H_1(W^q(D))\}$ where $i_*$ is the homomorphism induced from the inclusion. In particular, if $K$ is slice, then $d(\Sq(K),\fs_0)=0$ and moreover there is a $\Lambda$-metabolizer $P\subset H_1(\Sq(K))$ such that $d(\Sq(K),\fs_0+a)=0$ for all $a\in P$.
\end{theorem}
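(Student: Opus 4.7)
\smallskip

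The plan is to reduce the theorem to Ozsv\'ath--Szab\'o's foundational vanishing result: if a rational homology 3-sphere $(Y,\fs)$ bounds a $\Spinc$ rational homology 4-ball $(W,\mathfrak{t})$ with $\mathfrak{t}|_Y = \fs$, then $d(Y,\fs)=0$. With $W:=W^q(D)$, we already know from the discussion preceding Proposition~\ref{proposition:metabolizer for linking form} that $W$ is a rational homology 4-ball (since $q=p^r$ is a prime power; this uses a transfer/branched-cover computation showing $H_\ast(W;\Q)\cong H_\ast(\text{pt};\Q)$) with $\partial W = \Sq(K)$. So the whole argument hinges on identifying which $\Spinc$ structures on $\Sq(K)$ extend over $W$.

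First, I would show that the canonical $\Spinc$ structure $\fs_0$ extends over $W$. Let $f\colon W\to D^4$ be the branched covering map and $D':=f^{-1}(D)$. By the very definition of $\fs_0$ in Section~\ref{subsection:spinc structures}, $\fs_0$ restricted to $\Sq(K)\setminus N(K')$ is the pullback of the unique $\Spinc$ structure on $S^3\setminus N(K)$ along $f|_{\Sq(K)\setminus N(K')}$. Pulling back the unique $\Spinc$ structure on $D^4\setminus N(D)$ along $f|_{W\setminus N(D')}$ gives a $\Spinc$ structure on $W\setminus N(D')$ whose boundary restriction agrees with $\fs_0$ on $\Sq(K)\setminus N(K')$. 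Since $N(D')$ is a tubular neighborhood of a disk, the obstruction to extending a $\Spinc$ structure over $N(D')$ from its boundary lies in trivial cohomology, and uniqueness of $\fs_0$ then ensures the resulting global $\Spinc$ structure $\mathfrak{t}_0$ on $W$ satisfies $\mathfrak{t}_0|_{\partial W}=\fs_0$.

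Next, I would identify the extendable $\Spinc$ structures. The set of $\Spinc$ structures on $W$ restricting to a fixed $\fs$ on $\partial W$ is an affine space over $H^2(W,\partial W;\Z)$, which by Poincar\'e--Lefschetz duality equals $H_2(W;\Z)$, and the restriction-to-boundary map $\Spinc(W)\to \Spinc(\partial W)$ is compatible with the connecting homomorphism $H^2(W)\to H^2(\partial W)\cong H_1(\partial W)$. Chasing the long exact sequence shows that $\fs_0+a$ extends over $W$ if and only if $a\in \Ker\{i_\ast\colon H_1(\Sq(K))\to H_1(W)\}$. Applying the Ozsv\'ath--Szab\'o vanishing result to $(W,\mathfrak{t}_0+\alpha)$ for each such $a$ (with $\alpha\in H^2(W,\partial W)$ a preimage) gives $d(\Sq(K),\fs_0+a)=0$ for all $a$ in this kernel, proving the first statement.

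The ``in particular'' clause follows: when $K$ is slice, $0\in\Ker i_\ast$ yields $d(\Sq(K),\fs_0)=0$, and Proposition~\ref{proposition:metabolizer for linking form} shows $P:=\Ker i_\ast$ is a $\Lambda$-metabolizer for $\lambda^q$, on which all correction terms vanish. The main obstacle I expect is the bookkeeping in the second step: making the identification ``extendable $\Spinc$ structures correspond to $\Ker i_\ast$'' precise requires carefully tracking the affine-space structure on $\Spinc$ sets under the restriction map, together with the Poincar\'e--Lefschetz duality identification; once this is set up cleanly, the rest is a direct appeal to the Ozsv\'ath--Szab\'o vanishing theorem.
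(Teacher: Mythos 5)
The paper does not prove this statement itself---it is quoted as a known result from Grigsby--Ruberman--Strle \cite{Grigsby-Ruberman-Saso:2008-1}---and your argument is essentially the proof given there: $W^q(D)$ is a rational homology $4$-ball for $q$ a prime power, the canonical $\Spinc$ structure extends over it, the extendable $\Spinc$ structures are exactly those indexed by $\Ker i_*$ via Poincar\'e--Lefschetz duality and the long exact sequence of the pair, and Ozsv\'ath--Szab\'o's vanishing theorem finishes the job. Your reasoning is correct (modulo the harmless slip of calling the restriction $H^2(W)\to H^2(\partial W)$ a ``connecting homomorphism''; the connecting map is its dual $H_2(W,\partial W)\to H_1(\partial W)$), so there is nothing to add.
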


As obstructions for a knot to being doubly slice, we have the following theorem.
\begin{theorem}[{\cite[Theorem~2.2]{Meier:2015-1}}]
\label{theorem:d-invariant as double sliceness obstruction} Let $q$ be a prime power. If $K$ is a doubly slice knot, then $H_1(\Sq(K)) = M_+\oplus M_-$ where $M_\pm$ are $\Lambda$-metabolizers for the linking  form $\lambda^q$ on $H_1(\Sq(K))$ and $d(\Sq(K), \fs_0+a)=0$ for all $a\in M_+\cup M_-$. In particular, $K$ has doubly vanishing $d$-invariants.
\end{theorem}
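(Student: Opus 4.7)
The plan is to exploit the defining structure of a doubly sliced knot — two slice disks $D_\pm$ for $K$ whose union $D_+ \cup D_-$ is an unknotted $2$-sphere $S$ in $S^4$ — by passing to the $q$-fold branched cyclic covers of $D^4$ along $D_\pm$ and running a Mayer--Vietoris argument on the resulting closed $4$-manifold. The decomposition $H_1(\Sq(K)) = M_+ \oplus M_-$ will then fall out of the Mayer--Vietoris sequence, while the metabolizer and $d$-invariant conclusions will be read off from the two slice-disk obstructions applied to $D_+$ and $D_-$ separately.

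Fix slice disks $D_\pm$ for $K$ with $D_+ \cup D_- = S$ an unknotted $2$-sphere in $S^4$, and let $W^q(D_\pm)$ denote the associated $q$-fold branched cyclic covers of $D^4$. Since the unknotted $2$-sphere $S$ is the suspension of an unknot in $S^3$, the $q$-fold cyclic branched cover of $S^4$ along $S$ is the suspension of the $q$-fold branched cover of $S^3$ along the unknot, giving $\Sigma^q(S^4, S) \cong S^4$. Gluing along the common boundary $\Sq(K)$ then yields
\[
W^q(D_+) \cup_{\Sq(K)} W^q(D_-) \cong S^4.
\]
Applying Mayer--Vietoris to this decomposition, and using $H_1(S^4) = H_2(S^4) = 0$, the sequence collapses to an isomorphism
\[
H_1(\Sq(K)) \xrightarrow{\ \cong\ } H_1(W^q(D_+)) \oplus H_1(W^q(D_-))
\]
induced by the inclusions. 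Setting $M_\pm := \Ker\{H_1(\Sq(K)) \to H_1(W^q(D_\pm))\}$, this isomorphism immediately gives $H_1(\Sq(K)) = M_+ \oplus M_-$.

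To finish, apply the slice-disk machinery to each $D_\pm$ individually. Proposition~\ref{proposition:metabolizer for linking form} says that $M_\pm$ is a $\Lambda$-metabolizer for the linking form $\lambda^q$, and Theorem~\ref{theorem:d-invariant as sliceness obstruction} says that $d(\Sq(K), \fs_0 + a) = 0$ for all $a \in M_\pm$; specialising to $a = 0$ also gives $d(\Sq(K), \fs_0) = 0$, so $\dbar(\Sq(K), \fs_0 + a) = 0$ on each $M_\pm$. Varying the prime power $q$ then completes the proof that $K$ has doubly vanishing $d$-invariants. The only step with any real content is the identification $\Sigma^q(S^4, S) \cong S^4$, which is where the doubly slice hypothesis is genuinely needed — a single slice disk would not close up the ambient $4$-manifold upon branched covering — but it is a standard suspension argument and presents no real obstacle.
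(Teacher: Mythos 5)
Your proof is correct. The paper itself gives no proof of this statement---it is quoted from Meier's paper---and your argument (the $q$-fold branched cover of $S^4$ along the unknotted sphere $S$ is again $S^4$, Mayer--Vietoris over the splitting $W^q(D_+)\cup_{\Sq(K)}W^q(D_-)\cong S^4$ yields $H_1(\Sq(K))=M_+\oplus M_-$, and Proposition~\ref{proposition:metabolizer for linking form} together with Theorem~\ref{theorem:d-invariant as sliceness obstruction} applied to each disk gives the metabolizer and vanishing conclusions) is precisely the standard argument used in the cited source.
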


\section{Proofs of Theorems~\ref{theorem:main} and \ref{theorem:example}}
\label{section:polynomial splitting}
First we give a proof of Theorem~\ref{theorem:main}. 

\begin{proof}[Proof of Theorem~\ref{theorem:main}]
Let $q$ be a prime power, and $K:=K_1\# K_2$. Since $K$ is doubly slice, $K=S\cap S^3$ where $S$ is an unknotted 2-sphere in $S^4$ which transversely intersects $S^3$, where $S^3$ is the standard 3-sphere in $S^4$. We regard $S^4=D^4_+\cup D^4_-$, a union of two 4-balls, such that $\partial D^4_{\pm}=S^3$. Let $D_+:=S\cap D^4_+$ and $D_-:=S\cap D^4_-$. 

Since $S$ is unknotted in $S^4$, using a Mayer-Vietoris sequence, we obtain
\[
H_1(S^3\setminus K;\Lambda)=H_1(D^4_+\setminus D_+;\Lambda)\oplus H_1(D^4_-\setminus D_-;\Lambda).
\]
Since $H_1(S^3\setminus K;\Lambda)$ is $\Z$-torsion free, the summands on the right hand side are $\Z$-torsion free. Therefore, letting 
\[
M_\pm :=\Ker\{i_\pm\colon H_1(S^3\setminus K;\Lambda)\to H_1(D^4_\pm\setminus D_\pm;\Lambda)\} 
\]
where $i_\pm$ are homomorphisms induced from inclusions, by Proposition~\ref{proposition:metabolizer for Blanchfield form} the $M_\pm$ are metabolizers for the Blanchfield form 
\[
B\ell\colon H_1(S^3\setminus K;\Lambda)\times H_1(S^3\setminus K;\Lambda) \to S^{-1}\Lambda/\Lambda
\] 
and $H_1(S^3\setminus K;\Lambda)= M_+\oplus M_-$.

Note that $H_1(S^3\setminus K;\Lambda)= H_1(S^3\setminus K_1;\Lambda)\oplus H_1(S^3\setminus K_2;\Lambda)$. For $i=1,2$, let
\[
M^i_\pm:=M_\pm \cap H_1(S^3\setminus K_i;\Lambda).
\]
Then clearly $M^1_\pm \oplus M^2_\pm \subset M_\pm$. We need the following lemma, of which proof will be given later.

\begin{lemma}\label{lemma:splitting of metabolizers}
	$M_\pm = M^1_\pm \oplus M^2_\pm$ and $M^i_\pm$ are metabolizers for the Blanchfield form on $H_1(S^3\setminus K_i;\Lambda)$ for $i=1,2$.
\end{lemma}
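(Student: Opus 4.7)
The strategy is to exploit coprimality of $\Delta_1$ and $\Delta_2$ to decompose any element of $M_\pm$ along the summands of $H:=H_1(S^3\setminus K;\Lambda)$, and then to transfer the self-annihilation under $B\ell$ from $M_\pm$ to the pieces $M^i_\pm$. Throughout I will abbreviate $H^i := H_1(S^3\setminus K_i;\Lambda)$, so that $H = H^1 \oplus H^2$ as $\Lambda$-modules.

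For the direct-sum splitting, since $\gcd(\Delta_1,\Delta_2)=1$ in $\Q[t^{\pm 1}]$, a B\'ezout identity with denominators cleared yields $a,b\in\Lambda$ and a nonzero integer $N$ with $a\Delta_1+b\Delta_2=N$. The Alexander polynomial $\Delta_i$ annihilates the Alexander module $H^i$ (via a Seifert-matrix presentation), so for $x\in M_\pm$ written as $x=x_1+x_2$ with $x_i\in H^i$, one has $b\Delta_2\,x=b\Delta_2\,x_1$ and $a\Delta_1\,x=a\Delta_1\,x_2$, hence $Nx_1=b\Delta_2\,x\in M_\pm$ and $Nx_2=a\Delta_1\,x\in M_\pm$ since $M_\pm$ is a $\Lambda$-submodule. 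Now $M_\pm=\Ker\{i_\pm\colon H\to H_1(D^4_\pm\setminus D_\pm;\Lambda)\}$, and its codomain is $\Z$-torsion free (as already used in the proof of Theorem~\ref{theorem:main}), so $H/M_\pm$ is $\Z$-torsion free and $Nx_i\in M_\pm$ forces $x_i\in M_\pm$. Thus $x_i\in M^i_\pm$, giving $M_\pm=M^1_\pm\oplus M^2_\pm$.

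For the metabolizer claim, I will invoke the standard fact that the Blanchfield form splits orthogonally under connected sum, so $B\ell=B\ell_1\oplus B\ell_2$ with $H^1\perp H^2$ (orthogonality follows because $\Delta_1$ and $\Delta_2$ both annihilate any cross pairing, and coprimality forces such a pairing to vanish in $S^{-1}\Lambda/\Lambda$). The inclusion $M^i_\pm\subseteq (M^i_\pm)^{\perp_{B\ell_i}}$ is then immediate from $M_\pm\subseteq M_\pm^{\perp_{B\ell}}$. Conversely, if $y\in H^i$ pairs trivially with $M^i_\pm$ under $B\ell_i$, then by orthogonality of $H^i$ and $H^j$ ($j\neq i$) it pairs trivially with $M^j_\pm$ as well under $B\ell$, hence with all of $M_\pm=M^1_\pm\oplus M^2_\pm$. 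Since $M_\pm$ is a metabolizer, $y\in M_\pm\cap H^i=M^i_\pm$.

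The main obstacle I expect is the $\Z$-saturation step: passing from $Nx_i\in M_\pm$ to $x_i\in M_\pm$. This is precisely the place where the hypothesis of double (rather than single) sliceness is essential, because the $\Z$-torsion-freeness of $H_1(D^4_\pm\setminus D_\pm;\Lambda)$ rests on the fact that $D_+\cup D_-$ is an unknotted $2$-sphere in $S^4$. The remaining delicate point is justifying the orthogonal decomposition of the Blanchfield form over $\Lambda$ (rather than over $\Q[t^{\pm 1}]$), but this is a routine consequence of the connected-sum geometry and the coprimality assumption.
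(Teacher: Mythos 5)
Your proof is correct and follows essentially the same route as the paper: a cleared-denominator B\'ezout identity $a\Delta_1+b\Delta_2=N$ to show $Nx_i\in M_\pm$, a $\Z$-torsion-freeness argument (rooted in the doubly slice hypothesis) to saturate and conclude $x_i\in M_\pm$, and the orthogonal splitting of the Blanchfield form under connected sum to transfer the metabolizer property to the $M^i_\pm$. The only cosmetic difference is that the paper deduces $\Z$-torsion-freeness of $H/M_\pm$ from $M_\pm$ being a direct summand of $H_1(S^3\setminus K;\Lambda)$ rather than from the embedding of $H/M_\pm$ into $H_1(D^4_\pm\setminus D_\pm;\Lambda)$; both are valid.
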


Let $W_+$ (resp. $W_-$) be the $q$-fold cyclic cover of $D^4_+$ (resp. $D^4_-$) branched over $D_+$ (resp. $D_-$). Then $\partial W_\pm = \Sq(K)$. Note that 
\begin{align*}
H_1(\Sq(K)) &\cong H_1(S^3\setminus K;\Lambda)/(t^q-1),\\
H_1(W_\pm) &\cong H_1(D^4_\pm \setminus D_\pm;\Lambda)/(t^q-1).
\end{align*}

Therefore we have the following commutative diagram:
\[ \xymatrix{ 
H_1(S^3\setminus K;\Lambda)	\ar[r]^{\,\,\, f_*}\ar[d]^{i_\pm}&
    H_1(\Sq(K))\ar[d]^{j_\pm}\\
    H_1(D^4_\pm \setminus D_\pm;\Lambda) \ar[r]^{\,\,\, g_*}&
    H_1(W_\pm).    }
  \]
In the above diagram, $j_\pm$ are homomorphisms induced from inclusions, and $f_*$ and $g_*$ are the canonical surjections  sending a $\Lambda$-module to its quotient by $t^q-1$.

Let $G_\pm :=f_*(M_\pm)$ and $G^i_\pm:=f_*(M^i_\pm)$ for $i=1,2$. Since we have $H_1(S^3\setminus K;\Lambda)=H_1(S^3\setminus K_1;\Lambda)\oplus H_1(S^3\setminus K_2;\Lambda)$ and $H_1(\Sq(K))=H_1(\Sq(K_1))\oplus H_1(\Sq(K_2))$ as $\Lambda$-modules and $f_*$ preserves the direct sums, we have $G^i_\pm\subset H_1(\Sq(K_i))$ for $i=1,2$. Since $M^i_\pm$ are metabolizers for the Blanchfield form on $H_1(S^3\setminus K_i;\Lambda)$ for $i=1,2$, by Proposition~\ref{proposition:projection of metabolizers}, $G^i_\pm$ are metabolizers for the linking form $H_1(\Sq(K_i)\times H_1(\Sq(K_i))\to \Q/\Z$ for $i=1, 2$. Similarly, $G$ is a metabolizer for the linking form on $H_1(\Sq(K))$.  Moreover, we have $H_1(\Sq(K_i))=G^i_+\oplus G^i_-$ for $i=1,2$. 

Now we show that $\dbar(\Sq(K_1),\fs_0+a)=0$ for all $a\in G^1_+\cup G^1_-$. Let $a\in G^1_+$. Then $(a,0)\in G^1_+\oplus G^2_+ = G_+\subset H_1(\Sq(K))$. Since $j_+(G_+)=(j_+\circ f_*)(M_+)=(g_*\circ i_+)(M_+) = g_*(0) = 0$, we have $G_+\subset \Ker(j_+)$. Furthermore, since both $G_+$ and $\Ker(j_+)$ are metabolizers for the linking form on $H_1(\Sq(K))$, we have $|G_+|=|\Ker(j_+)|$ and hence $G_+=\Ker(j_+)$. Therefore, since $D_+$ is a slice disk for $K$, $d(\Sq(K), \fs_0+(a,0))=0$ by Theorem~\ref{theorem:d-invariant as sliceness obstruction}. Again, since $K$ is slice, by Theorem~\ref{theorem:d-invariant as sliceness obstruction} we have $d(\Sq(K),\fs_0)=0$.  Therefore,
\begin{alignat*}{2}
0	&=	&&d(\Sq(K),\fs_0+(a,0))\\
	&= 	&&d(\Sq(K),\fs_0+(a,0))-d(\Sq(K),\fs_0)\\
	&= 	&&d(\Sq(K_1),\fs_0+a)+d(\Sq(K_2),\fs_0)-\\
	&   	&&(d(\Sq(K_1),\fs_0)	+ d(\Sq(K_2),\fs_0))\\
	&=	&&d(\Sq(K_1),\fs_0+a)) - d(\Sq(K_1),\fs_0)\\
	&=	&&\dbar(\Sq(K_1), \fs_0+a).
\end{alignat*}
Similarly, $\dbar(\Sq(K_1),\fs_0+a)=0$ for all $a\in G^1_-$. Therefore $K_1$ has doubly vanishing $d$-invariants. Using similar arguments, one can show that $K_2$ also has doubly vanishing $d$-invariants.
\end{proof}

Next, we give a proof of Lemma~\ref{lemma:splitting of metabolizers}.
\begin{proof}[Proof of Lemma~\ref{lemma:splitting of metabolizers}]
The proof is almost the same as the one of \cite[Theorem~3.1]{Kim-Kim:2008-1}. The inclusion $M^1_+\oplus M^2_+\subset M_+$ is obvious, and we show that $M_+\subset M^1_+\oplus M^2_+$. For brevity, let $\Delta_i:=\Delta_{K_i}(t)$, the Alexander polynomial of $K_i$, for $i=1,2$. Since $\Delta_1$ and $\Delta_2$ are coprime in $\Q[t^{\pm 1}]$, there exist $\overline{f_1}, \overline{f_2}\in \Q[t^{\pm 1}]$ such that $\overline{f_1}\Delta_1 + \overline{f_2}\Delta_2=1$. Let $c$ be an integer such that $c\overline{f_1}, c\overline{f_2}\in \Lambda$, and let $f_1:=c\overline{f_1}$ and $f_2:=c\overline{f_2}$. Then we have $f_1\Delta_1 + f_2\Delta_2 = c\in \Z$. 

Let $z\in M_+$. Then $z=(x,y)\in H_1(S^3\setminus K_1;\Lambda)\oplus H_1(S^3\setminus K_2;\Lambda)$. Since $\Delta_1$ annihilates $H_1(S^3\setminus K_1;\Lambda)$, we have $\Delta_1x = 0$. Therefore, $cx = (f_1\Delta_1+ f_2\Delta_2)x = f_2\Delta_2x$. Since $\Delta_2$ annihilates $H_1(S^3\setminus K_2;\Lambda)$, we have $\Delta_2y=0$ and hence $f_2\Delta_2y=0$. Therefore $f_2\Delta_2z = (f_2\Delta_2x,0) = (cx,0)$, and we have $(cx,0)\in M_+$. Since $M_+$ is a direct summand of $H_1(S^3\setminus K;\Lambda)$, which is $\Z$-torsion free, we have $(x,0)\in M_+$. Therefore $x\in M^1_+$. Similarly, one can show that $y\in M^2_+$, and now we have $z\in M^1_+\oplus M^2_+$. It follows that $M_+ = M^1_+\oplus M^2_+$. Similarly, one can show that $M_-=M^1_-\oplus M^2_-$. 

Now we show that for each $i=1,2$, the modules $M^i_\pm$ are metabolizers of the Blanchfield form $B\ell_i\colon H_1(S^3\setminus K_i;\Lambda)\times H_1(S^3\setminus K_i;\Lambda)\to S^{-1}\Lambda/\Lambda$. Let $x_1,x_2\in M^1_+$. Then $(x_1,0),(x_2,0)\in M^1_+\oplus M^2_+ = M_+$, which is a metabolizer. Therefore $B\ell((x_1,0),(x_2,0)) = 0$. Since $B\ell((x_1,0),(x_2,0))=B\ell_1(x_1,x_2)+ B\ell_2(0,0) = B\ell_1(x_1,x_2)$, we have $B\ell_1(x_1,x_2)=0$. This implies that $M^1_+\subset (M^1_+)^\perp$.

Conversely, let $x_1\in (M^1_+)^\perp$. Then, for every $(x_2,y)\in M^1_+\oplus M^2_+=M_+$, we have $B\ell((x_1,0),(x_2,y))=B\ell_1(x_1,x_2) + B\ell_2(0,y) = 0 + 0 = 0$. Therefore $(x_1,0)=(M_+)^\perp = M_+$. Since $M_+=M^1_+\oplus M^2_+$, we have $x_1\in M^1_+$. Therefore $(M^1_+)^\perp\subset M^1_+$, and consequently we have $M^1_+=(M^1_+)^\perp$ and $M^1_+$ is a metabolizer for $B\ell_1$. 

Similarly, we can show that $M^2_+$, $M^1_-$, and $M^2_-$ are also metabolizers. 
\end{proof}

We finally prove Theorem~\ref{theorem:example}.
\begin{proof}[Proof of Theorem~\ref{theorem:example}]
We prove Part~(1). The knot $K$ is slice since there is a surgery curve for a slice disk on the left band of the obvious Seifert surface for $K$. Recall that $K$ is a satellite of $T$ with pattern the $9_{46}$ knot. It is known that $9_{46}$ is doubly slice, and since $\Delta_T(t)=1$, $T$ is topologically doubly slice by Freedman's work. Now $K$ is a satellite of a topologically doubly slice knot whose pattern is doubly slice, and therefore $K$ is topologically doubly slice (see \cite[Proposition~3.4]{Meier:2015-1}). 

We show that $K$ is not doubly slice. The needed computation is already done in \cite{Cochran-Harvey-Horn:2012-1}. Our knot $K$ is the same as the knot $K=R(J,T)$ in \cite[Figure~8.1]{Cochran-Harvey-Horn:2012-1} with the choice $J=U$, the unknot. As computed in \cite[Section~8]{Cochran-Harvey-Horn:2012-1}, for the 3-fold branched cyclic cover of $S^3$ over $K$, one can show that $H_1(\Sigma^3(K))\cong \Z_7\langle x_1\rangle \oplus \Z_7\langle y_1\rangle$ and the linking form on $H_1(\Sigma^3(K))$ has only two metabolizers $\langle x_1\rangle$ and $\langle y_1\rangle$. In the proof of Lemma~8.2 in \cite{Cochran-Harvey-Horn:2012-1}, it is assumed that  $K=R(U,T)$, which is the same as our $K$, and it is computed that $d(\Sigma^3(K), \fs_0+4x_2)\le -\frac32$ for the element $x_2$ such that $4x_2=x_1$. Since the linking form on $H_1(\Sigma^3(K))$ has only two metabolizers $\langle x_1\rangle$ and $\langle y_1\rangle$, it follows that $K$ does not have doubly vanishing $d$-invariants. In particular, by Theorem~\ref{theorem:d-invariant as double sliceness obstruction}, $K$ is not doubly slice. 

We prove Part~(2). Since each $K_{p_i}$ has the same Alexander polynomial with $T_{2,p_i}\# T_{2,-p_i}$, we have $\Delta_{K_{p_i}}(t) = \phi_{2p_i}^2$ where $\phi_q$ denotes the $q$-th cyclotomic polynomial. Also note that $\Delta_K(t) = (2t-1)(t-2)$. Therefore all $nK$ and $n_iK_{p_i}$ have mutually coprime Alexander polynomials. 

Suppose $n=1$. By Part~(1), the knot $K$ does not have doubly vanishing $d$-invariants. Therefore by Theorem~\ref{theorem:main} the knot $K\# (\#_{i=1}^m n_iK_{p_i})$ is not doubly slice. 

Suppose $n_i=1$ for some $i$. By rearranging $p_i$, we may assume $n_1=1$. It was shown in \cite[Corollary~5.2]{Meier:2015-1} that $K_{p_i}$ does not have doubly vanishing $d$-invariants. Again by Theorem~\ref{theorem:main} the knot $nK\# (\#_{i=1}^m n_iK_{p_i})=K_{p_1}\# nK\# (\#_{i=2}^m n_iK_{p_i})$ is not doubly slice. 
\end{proof}

\bibliographystyle{amsalpha}
\renewcommand{\MR}[1]{}
\providecommand{\bysame}{\leavevmode\hbox to3em{\hrulefill}\thinspace}
\providecommand{\MR}{\relax\ifhmode\unskip\space\fi MR }
\providecommand{\MRhref}[2]{%
  \href{http://www.ams.org/mathscinet-getitem?mr=#1}{#2}
}
\providecommand{\href}[2]{#2}


\end{document}